\numberwithin{equation}{section}
\theoremstyle{plain}
\newtheorem{thm}{Theorem}[section]
\newtheorem{prop}[thm]{Proposition}
\newtheorem{lemma}[thm]{Lemma}
\newtheorem{remark}[thm]{Remark}
\begin{document}
\title[Landau equation]
{A remark about time-analyticity of the linear Landau equation with soft potential
}

\author[C.-J. Xu \& Y. Xu]
{Chao-Jiang Xu and Yan Xu}

\address{Chao-Jiang Xu and Yan Xu
\newline\indent
School of Mathematics and Key Laboratory of Mathematical MIIT,
\newline\indent
Nanjing University of Aeronautics and Astronautics, Nanjing 210016, China
}
\email{xuchaojiang@nuaa.edu.cn; xuyan1@nuaa.edu.cn}

\date{\today}

\subjclass[2010]{35B65,76P05,82C40}

\keywords{Spatially homogeneous Landau equation, analytic smoothing effect, soft potentials}

\begin{abstract}
In this note, we study the Cauchy problem of the linear spatially homogeneous Landau equation with soft potentials.
We prove that the solution to the Cauchy problem enjoys the analytic regularizing effect of the time variable with an $L^2$ initial datum for positive time. So that the smoothing effect of Cauchy problem for the linear spatially homogeneous Landau equation with soft potentials is similar to the heat equation.
\end{abstract}

\maketitle

\section{Introduction}
\par The Cauchy problem of spatially homogenous Landau equation reads 
\begin{equation}\label{1-1}
\left\{
\begin{aligned}
 &\partial_t F=Q(F, F),\\
 &F|_{t=0}=F_0,
\end{aligned}
\right.
\end{equation}
where $F=F(t, v)\ge0$ is the density distribution function at time $t\ge0$, with the velocity variable $v\in\mathbb R^3$. The Landau bilinear collision operator is defined by
\begin{equation}\label{Q}
    Q(G, F)(v)=\sum_{j, k=1}^3\partial_j\bigg(\int_{\mathbb R^3}a_{jk}(v-v_*)[G(v_*)\partial_kF(v)-\partial_kG(v_*)F(v)]dv_*\bigg),
\end{equation}
with
\begin{displaymath}
   a_{jk}(v)=(\delta_{jk}|v|^2-v_jv_k)|v|^\gamma,\quad \gamma\ge-3,
\end{displaymath}
is a symmetric non-negative matrix such that
$$\sum_{j, k=1}^3a_{jk}(v)v_jv_k=0.$$
Here, $\gamma$ is a parameter which leads to the classification of the hard potential if $\gamma>0$, Maxwellian molecules if $\gamma=0$, soft potential if $-3<\gamma<0$ and Coulombian potential if $\gamma=-3$.

The Landau equation was introduced as a limit of the Boltzmann equation when the collisions become grazing in~\cite{D-1, V-1}. In the hard potential case, the existence, and the uniqueness of the solution to the Cauchy problem for the spatially homogeneous Landau equation have been addressed by Desvillettes and Villani in~\cite{D-2, V-2}. Meanwhile, they also proved the smoothness of the solution is $C^\infty(]0, \infty[; \mathcal S(\mathbb R^3))$. The analytic and the Gevrey regularity of
the solution for any $t>0$ have already been studied in~\cite{C-1, C-2}.

We shall study the linearization of the Landau equation \eqref{1-1} near the Maxwellian distribution
\begin{equation*}
    \mu(v)=(2\pi)^{\frac32}e^{-\frac{|v|^2}{2}}.
\end{equation*}
Considering the fluctuation of the density distribution function
\begin{equation*}
    F(t, v)=\mu(v)+\sqrt\mu(v)f(t, v),
\end{equation*}
since $Q(\mu, \mu)=0$, the Cauchy problem \eqref{1-1} takes the form
\begin{equation*}
\left\{
\begin{aligned}
 &\partial_tf+\mathcal Lf=\Gamma(f, f),\\
 &f|_{t=0}=f_0,
\end{aligned}
\right.
\end{equation*}
with $F_0=\mu+\sqrt\mu f_0$, where
\begin{equation*}
    \Gamma(f, f)=\mu^{-\frac12}Q(\sqrt\mu f, \sqrt\mu f),
\end{equation*}
\begin{equation*}
\mathcal{L}=\mathcal{L}_1+\mathcal{L}_2,\ \ \ \mbox{with}\ \ \    \mathcal L_1f=-\Gamma(\sqrt\mu, f), \quad \mathcal L_2f=-\Gamma(f, \sqrt\mu).
\end{equation*}

The spatially homogeneous Landau equation and non-cutoff Boltzmann equation in a close-to-equilibrium framework have been studied in~\cite{L-1} and the Gelfand-Shilov smoothing effect has been proved in~\cite{L-2, M-1}. Guo~\cite{G-1} constructed the classical solution for the spatially inhomogeneous Landau equation near a global Maxwellian in a periodic box. The smoothness of the solutions has been studied in~\cite{C-3, H-1, L-3}. In addition, the analytic smoothing effect of the velocity variable for the nonlinear Landau equation has been treated in~\cite{L-4, M-2}. The variant regularity results under a close to equilibrium setting have been considered in~\cite{C-4, C-5, S-1}.

In this work, we consider the Cauchy problem of the linear Landau equation, such as
\begin{equation}\label{1-2}
\left\{
\begin{aligned}
 &\partial_tf+\mathcal Lf=g,\\
 &f|_{t=0}=f_0,
\end{aligned}
\right.
\end{equation}
where $g$ is a analytic function with respect to the variable $t$ and $v$. The diffusion part $\mathcal L_1$ is written as follows
\begin{equation}\label{1-3}
    \mathcal L_1f=-\nabla_v\cdot[A(v)\nabla_vf]+\left(A(v)\frac{v}{2}\cdot\frac{v}{2}\right)f-\nabla_v\cdot\left[A(v)\frac{v}{2}\right]f,
\end{equation}
with $A(v)=(\bar a_{jk})_{1\le j, k\le3}$ is a symmetric matrix, and
\begin{equation*}
    \bar a_{jk}=a_{jk}*\mu=\int_{\mathbb R^3}\left(\delta_{jk}|v-v'|^2-(v_j-v'_j)(v_k-v'_k)\right)|v-v'|^\gamma\mu(v')dv'.
\end{equation*}

We say that $u\in\mathcal A(\Omega)$ is an analytic function, where $\Omega\subset\mathbb R^n$ is an open domain, if $u\in C^\infty(\Omega)$ and there exists a constant $C$ such that for all multi-indices $\alpha\in\mathbb N^n$,
\begin{equation*}
    \|\partial^\alpha u\|_{L^\infty(\Omega)}\le C^{|\alpha|+1}\alpha!.
\end{equation*}
Remark that, by using the Sobolev embedding, we can replace the $L^\infty$ norm by the $L^2$ norm, or norm in any Sobolev space in the above definition.

We study the linear Landau equation \eqref{1-2}, with $-3<\gamma<0$, and show that the solution to the Cauchy problem \eqref{1-2} with the $L^2(\mathbb R^3)$ initial datum enjoys the analytic regularizing effect of the time variable. The main result reads as follows.

\begin{thm}\label{thm}
    For the soft potential $-3<\gamma<0$, for any $T>0$ and the initial datum $f_0\in L^2(\mathbb R^3)$. Let $f$ be the solution of the Cauchy problem \eqref{1-2}, then there exists a constant $C>0$ such that for any $k\in\mathbb N$, we have
    \begin{equation}\label{1-4}
        \|\partial^k_tf(t)\|_{L^2(\mathbb R^3)}\le\frac{C^{k+1}}{t^k}k!,\quad\forall t\in]0,T].
    \end{equation}
\end{thm}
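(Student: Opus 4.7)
The plan is to prove \eqref{1-4} by induction on $k\ge 0$, the key observation being that if $f$ solves \eqref{1-2}, then for every $k$ its $k$-th time derivative $f_k:=\partial_t^k f$ satisfies the inhomogeneous equation
\[
\partial_t f_k + \mathcal{L} f_k = \partial_t^k g,
\]
since $\mathcal{L}$ does not depend on $t$. The forcing $\partial_t^k g$ obeys a bound of the form $\|\partial_t^k g(t)\|_{L^2}\le M^{k+1}k!$ by the analyticity of $g$ in $t$, so the same $L^2$-energy machinery applies at every level $k$; what must be tracked carefully is the $k$-dependence of the constants and the compensation of the boundary singularity at $t=0$ through a time weight $t^k$.

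The core ingredient is the classical coercivity estimate for the linearized Landau operator in the soft-potential regime, recorded in \cite{L-2, M-1}: there exist $c_0, C_0>0$ and an anisotropic dissipation norm $\|\cdot\|_{*}$ (scaling like a weighted $H^1_v$ with weight $\langle v\rangle^{\gamma/2}$ on $\nabla_v h$ and $\langle v\rangle^{(\gamma+2)/2}$ on $h$) such that
\[
(\mathcal{L} h, h)_{L^2}\ge c_0\|h\|_{*}^2 - C_0\|h\|_{L^2}^2.
\]
Testing the equation for $f_k$ against the multiplier $t^{2k}f_k$ and setting $E_k(t):=t^{2k}\|f_k(t)\|_{L^2}^2$ yields the differential inequality
\[
\frac{d}{dt}E_k + 2c_0\, t^{2k}\|f_k\|_{*}^2 \le \Big(2C_0 + \frac{2k}{t}\Big) E_k + 2\,t^{2k}\|\partial_t^k g\|_{L^2}\|f_k\|_{L^2}.
\]
The inductive goal is then to show $E_k(t)\le A^{2k+1}(k!)^2$ on $(0,T]$ for a constant $A$ independent of $k$; the conclusion \eqref{1-4} follows upon taking the square root.

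The main obstacle will be the term $(2k/t)E_k$ arising from $\partial_t(t^{2k})$: because $\gamma<0$, the dissipation $\|f_k\|_{*}^2$ is strictly weaker than $\|f_k\|_{L^2}^2$ on the large-velocity region, so this bad term cannot be absorbed directly into the good one. My plan is to bypass this by using the equation at the previous level, $f_k = \partial_t^{k-1}g - \mathcal{L} f_{k-1}$, to split one copy of $\|f_k\|_{L^2}$ and pair it against $\|f_{k-1}\|_{*}$; then Young's inequality absorbs the new $\|f_k\|_{*}$-piece into the dissipation on the left-hand side, and the remaining piece $k^2 t^{2k-2}\|f_{k-1}\|_{*}^2$ is, after integration in time, controlled by the dissipation integral $\int_0^t s^{2(k-1)}\|f_{k-1}(s)\|_{*}^2\,ds$ already produced by the inductive hypothesis at level $k-1$. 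The combinatorial identity $k\cdot(k-1)!=k!$ is what produces the correct factorial: one extra factor of $k$ from the time weight pairs with the $(k-1)!$ of the previous level. A parallel (easier) argument handles the source term via the analyticity of $g$, and a Gronwall step on $(0,T]$ absorbs the remaining $2C_0 E_k$, completing the induction and yielding \eqref{1-4}.
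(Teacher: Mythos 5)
Your proposal is correct and follows essentially the same route as the paper: induction on $k$ with the time weight $t^k$, an energy estimate using the coercivity of $\mathcal L$ in the anisotropic dissipation norm (the paper's $\|\cdot\|_A$), and, crucially, handling the bad term $k\,t^{2k-1}\|\partial_t^k f\|_{L^2}^2$ by substituting $\partial_t^k f=\partial_t^{k-1}g-\mathcal L\partial_t^{k-1}f$, pairing against the dissipation of the previous level via the bilinear continuity of $\mathcal L$ in that norm, and closing with the dissipation integral carried in the induction hypothesis and the bookkeeping $k\cdot(k-1)!=k!$. This is exactly the scheme of the paper's Proposition 4.1 (with Lemmas 3.1--3.2 as the base cases), so no further comparison is needed.
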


For the linear operator with only the diffusion part of $\mathcal{L}_1$, the paper~\cite{L-5} prove that the Cauchy problem \eqref{1-2} admits a unique weak solution, and the solution satisfies for any $\alpha\in\mathbb N^3$, $\tilde t=\min(t,1)$,
    \begin{equation*}
        \|\tilde t^{\frac{|\alpha|}{2}}\langle\cdot\rangle^{\frac{\gamma|\alpha|}{2}}\partial^\alpha f(t)\|_{L^2(\mathbb R^3)}\le C^{|\alpha|+1}\alpha!,\quad\forall t>0.
    \end{equation*}
With the similar computation, one can obtain the same analytical results as above, then using again the equation of \eqref{1-2}, on have
$$
f\in C^\infty(]0, +\infty[; \mathcal A(\mathbb{R}^3)).
$$
So that we just need to prove \eqref{1-4} for the smooth solution of Cauchy problem \eqref{1-2}.

\section{Analysis of the Landau Linear Operator}

In the following, the notation $A\lesssim B$ means there exists a constant $C>0$ such that $A\le CB$. For simplicity, with $\gamma\in\mathbb R$, we denote the weighted Lebesgue spaces
\begin{equation*}
    \|\langle\cdot\rangle^\gamma f\|_{L^p(\mathbb R^3)}=\|f\|_{p, \gamma},\quad 1\le p\le\infty,
\end{equation*}
where we use the notation $\langle v\rangle=(1+|v|^2)^{\frac12}$. And for the
matrix $A$ defined in \eqref{1-3}, we denote
\begin{equation*}
    \|f\|^2_A=\sum_{j, k=1}^3\int\left(\bar a_{jk}\partial_jf\partial_kf+\frac14\bar a_{jk}v_jv_kf^2\right)dv.
\end{equation*}

From corollary 1 in~\cite{G-1}, for $\gamma>-3$, there exists a constant $C_1>0$ such that
\begin{equation}\label{2-1}
    \|f\|^2_{A}\ge C_1\left(\|\mathbf P_v\nabla f\|^2_{2, \frac{\gamma}{2}}+\|(\mathbf I-\mathbf P_v)\nabla f\|^2_{2, 1+\frac{\gamma}{2}}+\|f\|^2_{2, 1+\frac{\gamma}{2}}\right),
\end{equation}
where for any vector-valued function $G(v)=(G_1, G_2, G_3)$ define the projection to the vector $v=(v_1, v_2, v_3)\in\mathbb R^3$ as
\begin{equation*}
    (\mathbf P_vG)_j=\sum_{k=1}^3G_kv_k\frac{v_j}{|v|^2},\quad 1\le j\le3.
\end{equation*}
Since
\begin{equation*}
    \nabla f=\mathbf P_v\nabla f+(\mathbf I-\mathbf P_v)\nabla f,
\end{equation*}
combining the inequality \eqref{2-1}, we have
\begin{equation}\label{2-2}
    \|f\|_{A}\ge C_1\left(\|\nabla f\|_{2,\frac{\gamma}{2}}+\|f\|_{2, 1+\frac{\gamma}{2}}\right).
\end{equation}

For later use, We need the following results for the coefficients to the linear Landau operator, which have been proved in~\cite{L-5}.

\begin{lemma}[~\cite{L-5}]
    For any $\beta\in\mathbb R^3$ with $|\beta|\ge1$ and $\bar a_{jk}$ was defined in \eqref{1-3} with $-3<\gamma<0$, then we have
    \begin{equation}\label{a}
        |\partial^\beta\bar a_{jk}(v)|\lesssim\langle v\rangle^{\gamma+1}\sqrt{\beta!}.
    \end{equation}
    Moreover, for any $\beta\in\mathbb R^3$,
    \begin{equation}\label{A}
    \begin{split}
        &\left|\partial^\beta\bigg(\sum_{j, k=1}^3\partial_ja_{jk}*(v_k\mu)\bigg)\right|\lesssim\langle v\rangle^{\gamma+1}(|\beta|+1)\sqrt{\beta!},\\
        &\left|\partial^\beta\bigg(\sum_{j, k=1}^3\bar a_{jk}v_jv_k\bigg)\right|\lesssim\langle v\rangle^{\gamma+1}(|\beta|+1)\sqrt{\beta!}.
    \end{split}
    \end{equation}
\end{lemma}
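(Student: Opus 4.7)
The plan is to exploit the convolution structure $\bar a_{jk}=a_{jk}\ast\mu$ together with the Gaussian form of $\mu$. The key observation is that $\partial^\beta\mu(v)=(-1)^{|\beta|}\mathcal H_\beta(v)\mu(v)$ where $\mathcal H_\beta$ is the multivariate probabilists' Hermite polynomial, and orthogonality gives $\int|\mathcal H_\beta(v)|^2\mu(v)\,dv\le C\beta!$, so by Cauchy--Schwarz $\int|\mathcal H_\beta(v)|\mu(v)\,dv\lesssim\sqrt{\beta!}$. This is the source of the $\sqrt{\beta!}$ factor in both claims.

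For \eqref{a}, since $|\beta|\ge 1$ I would transfer exactly one derivative onto $a_{jk}$: pick $l$ with $\beta_l\ge 1$ and write
\[
\partial^\beta\bar a_{jk}(v)=\int(\partial_l a_{jk})(v-v')\,\partial^{\beta-e_l}\mu(v')\,dv'.
\]
A direct calculation from $a_{jk}(w)=(\delta_{jk}|w|^2-w_jw_k)|w|^\gamma$ gives $|\partial_l a_{jk}(w)|\le C|w|^{\gamma+1}$, one power smaller than the naive bound $|a_{jk}(w)|\le C|w|^{\gamma+2}$; this is precisely the origin of the $\langle v\rangle^{\gamma+1}$ in \eqref{a}. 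I would then split the $v'$-integral at $|v'|=|v|/2$: on $|v'|\le|v|/2$ one has $|v-v'|\ge|v|/2$, so $|v-v'|^{\gamma+1}\lesssim\langle v\rangle^{\gamma+1}$, and the residual $\int|\mathcal H_{\beta-e_l}|\mu$ contributes $\sqrt{\beta!}$. On $|v'|>|v|/2$ the Gaussian provides $e^{-c|v|^2}$ decay that absorbs all polynomial factors, while the remaining spatial singularity $|v-v'|^{\gamma+1}$ is locally integrable in $\mathbb R^3$ because $\gamma+1>-2>-3$. Uniformity for $|v|\le 1$ is recovered by bounding $\langle v\rangle^{\gamma+1}$ below by a positive constant on compact sets.

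For \eqref{A}, I would invoke the algebraic identities of the Landau cross section, all derived by direct computation from the explicit form of $a_{jk}$: $\sum_{jk}a_{jk}(w)w_jw_k=0$, $\sum_j a_{jk}(w)w_j=0$, and $\sum_j\partial_j a_{jk}(w)=-2w_k|w|^\gamma$. The first expression in \eqref{A} reduces via the last of these identities to convolutions whose integrand carries a factor $|w|^{\gamma+1}$, to which the Step-2 splitting argument applies. The second expression is rewritten via the expansion $v_jv_k=((v-v')+v')_j((v-v')+v')_k$: the two vanishing identities eliminate three of the four cross terms, leaving $\sum_{jk}a_{jk}(v-v')v'_jv'_k$, which is again controlled by the same machinery after distributing derivatives. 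The extra factor $(|\beta|+1)$ arises from Leibniz on the polynomial prefactors $v_k$ and $v_jv_k$: since these have degree $\le 2$, only $|\beta'|\le 2$ contributes nonzero polynomial derivatives, and the binomial coefficients $\binom{|\beta|}{|\beta'|}$ yield a linear-in-$|\beta|$ number of surviving terms.

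The main technical obstacle I anticipate is the $|v'|>|v|/2$ region when $\gamma$ is close to $-3$: the naive Cauchy--Schwarz bound on $|v-v'|^{\gamma+1}\mu(v')$ loses integrability once $2(\gamma+1)\le-3$. I would handle this by an additional inner splitting of $|v-v'|<1$ (where $|v-v'|^{\gamma+1}$ is locally integrable, paired pointwise against the Gaussian-bounded Hermite factor) versus $|v-v'|\ge 1$ (where $|v-v'|^{\gamma+1}$ is bounded), absorbing both pieces into $\langle v\rangle^{\gamma+1}\sqrt{\beta!}$ via the exponentially small $e^{-c|v|^2}$ coming from $\mu(v')$ on $|v'|>|v|/2$.
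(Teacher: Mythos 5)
First, note that the paper itself gives no proof of this lemma: it is imported verbatim from \cite{L-5}, so there is nothing internal to compare your argument against. Judged on its own terms, your strategy for \eqref{a} and for the first inequality of \eqref{A} is sound and is the standard one: transfer one derivative onto the kernel (using that $\partial_l a_{jk}$ is homogeneous of degree $\gamma+1$, locally integrable since $\gamma+1>-2$), put the remaining $\beta-e_l$ derivatives on the Gaussian so that Hermite orthogonality/Cram\'er produces the $\sqrt{\beta!}$, and split the convolution at $|v'|=|v|/2$ (with the further split at $|v-v'|=1$ to avoid the failure of Cauchy--Schwarz when $2(\gamma+1)\le-3$). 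Your identity $\sum_j\partial_ja_{jk}(w)=-2w_k|w|^\gamma$ is correct, and your accounting of the $(|\beta|+1)$ factor via Leibniz on the degree-$\le2$ polynomial prefactors is also correct.

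There is, however, a genuine gap in your treatment of the second inequality of \eqref{A}. Your reduction via $v=(v-v')+v'$ and the identities \eqref{2-3}, $\sum_{jk}a_{jk}(w)w_jw_k=0$ correctly leaves $\sum_{jk}a_{jk}*(v_jv_k\mu)$, but the surviving kernel is $a_{jk}(w)$ itself, of size $|w|^{\gamma+2}$, not $|w|^{\gamma+1}$; ``the same machinery'' therefore yields $\langle v\rangle^{\gamma+2}$, not the stated $\langle v\rangle^{\gamma+1}$. This is not something a cleverer argument can repair: at $\beta=0$ one has
\begin{equation*}
\sum_{j,k}\bar a_{jk}(v)v_jv_k=\int_{\mathbb R^3}|v-v'|^\gamma\,|v\times v'|^2\,\mu(v')\,dv'\;\ge\;c\,\langle v\rangle^{\gamma+2}
\end{equation*}
for large $|v|$ (restrict to $|v'|\le1$), so the bound with weight $\langle v\rangle^{\gamma+1}$ is simply false and the exponent in the statement should read $\gamma+2$. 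You should flag this rather than assert the claim follows; note that the weaker $\langle v\rangle^{\gamma+2}$ bound is all that is used downstream (e.g.\ in Proposition \ref{prop L}, where $\int\langle v\rangle^{\gamma+2}|f_1f_2|\,dv\le\|f_1\|_{2,1+\frac{\gamma}{2}}\|f_2\|_{2,1+\frac{\gamma}{2}}\lesssim\|f_1\|_A\|f_2\|_A$), so nothing in the paper is endangered.
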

And
\begin{lemma}[~\cite{L-5}]
    Let $f_1, f_2\in\mathcal S(\mathbb R^3)$, $\bar a_{jk}$ was defined in \eqref{1-3} with $-3<\gamma<0$. For any $\beta\in\mathbb R^3$, we have
    \begin{equation}\label{0}
        \left|\sum_{j, k=1}^3(\partial^\beta\bar a_{jk}\partial_kf_1, \partial_jf_2)_{L^2(\mathbb R^3)}\right|\lesssim\sqrt{\beta!}\|f_1\|_A\|f_2\|_A.
    \end{equation}
\end{lemma}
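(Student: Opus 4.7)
The proof naturally splits on whether $\beta=0$ or $|\beta|\ge 1$, since the hypothesis \eqref{a} only applies in the latter case.

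For $\beta=0$, the matrix $(\bar a_{jk}(v))_{j,k}$ is pointwise symmetric and nonnegative definite, being the convolution of the nonnegative Landau kernel with the positive Maxwellian $\mu$. The pointwise Cauchy--Schwarz inequality for nonnegative quadratic forms, followed by an $L^2$ Cauchy--Schwarz in $v$, then gives
$$\Bigl|\sum_{j,k}(\bar a_{jk}\partial_k f_1,\partial_j f_2)_{L^2}\Bigr|\le\Bigl(\sum_{j,k}(\bar a_{jk}\partial_k f_1,\partial_j f_1)_{L^2}\Bigr)^{1/2}\Bigl(\sum_{j,k}(\bar a_{jk}\partial_k f_2,\partial_j f_2)_{L^2}\Bigr)^{1/2}\le\|f_1\|_A\|f_2\|_A$$
by the very definition of $\|\cdot\|_A$.

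For $|\beta|\ge 1$, applying the pointwise bound \eqref{a} extracts the $\sqrt{\beta!}$ factor and reduces the claim to the weighted inequality $\int\langle v\rangle^{\gamma+1}|\nabla f_1||\nabla f_2|\,dv\lesssim\|f_1\|_A\|f_2\|_A$. The plan is to decompose $\nabla f_i=\mathbf P_v\nabla f_i+(\mathbf I-\mathbf P_v)\nabla f_i$ and split the integral into the four resulting pieces, using the weight splitting $\gamma+1=(1+\gamma/2)+\gamma/2$. In any piece containing a tangential factor $(\mathbf I-\mathbf P_v)\nabla f_i$, I would assign the heavier weight $\langle v\rangle^{1+\gamma/2}$ to that factor, so that Cauchy--Schwarz followed by \eqref{2-1} and \eqref{2-2} closes the estimate.

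The main obstacle is the purely radial piece $\int\langle v\rangle^{\gamma+1}|\mathbf P_v\nabla f_1||\mathbf P_v\nabla f_2|\,dv$, because \eqref{2-1} controls $\mathbf P_v\nabla f$ only at weight $\langle v\rangle^{\gamma/2}$, one power of $\langle v\rangle$ short of what a naive Cauchy--Schwarz demands. To close this gap, I would use the identity $(\mathbf P_v\nabla f)_k=v_k(v\cdot\nabla f)/|v|^2$ to rewrite the bilinear form pointwise as
$$\sum_{j,k}\partial^\beta\bar a_{jk}(\mathbf P_v\nabla f_1)_k(\mathbf P_v\nabla f_2)_j=\frac{(v\cdot\nabla f_1)(v\cdot\nabla f_2)}{|v|^4}\sum_{j,k}\partial^\beta\bar a_{jk}\,v_jv_k,$$
and then invoke the sharper scalar bound for $\sum_{j,k}\bar a_{jk}v_jv_k$ available from \eqref{A}, which carries weight $\langle v\rangle^{\gamma+1}$ rather than the $\langle v\rangle^{\gamma+2}$ one would get by merely combining the structural estimate on $\bar a_{jk}$ with $|v_jv_k|\le\langle v\rangle^2$; a Leibniz expansion expresses $\sum\partial^\beta\bar a_{jk}v_jv_k$ in terms of $\partial^\beta\bigl(\sum\bar a_{jk}v_jv_k\bigr)$ plus lower-order corrections bounded by \eqref{a}. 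Finally, using $v\cdot\nabla f=\operatorname{div}(vf)-3f$ and one integration by parts trades a radial derivative for $f$ itself, which is controlled by $\|f\|_{2,1+\gamma/2}\lesssim\|f\|_A$ from \eqref{2-1}; the near-origin behavior is not problematic because the vanishing of $v\cdot\nabla f$ at $v=0$ compensates the $|v|^{-4}$ singularity. I expect this last reduction to be the most technically delicate step.
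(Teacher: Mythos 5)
First, a point of reference: the paper does not prove this lemma at all --- it is quoted from~\cite{L-5} --- so there is no internal proof to compare against and your argument must stand on its own. Its architecture is sound and is essentially what such a proof has to look like: the case $\beta=0$ by the pointwise Cauchy--Schwarz inequality for the nonnegative form $(\bar a_{jk})$ followed by Cauchy--Schwarz in $v$; for $|\beta|\ge1$ the reduction via \eqref{a}, the splitting $\nabla f=\mathbf P_v\nabla f+(\mathbf I-\mathbf P_v)\nabla f$ with the weight distribution $\gamma+1=\frac{\gamma}{2}+(1+\frac{\gamma}{2})$; and, crucially, the recognition that the radial--radial piece is the only one that does not close by absolute values and must be handled through the quadratic form $\sum_{j,k}\partial^\beta\bar a_{jk}v_jv_k$, i.e.\ through the null structure $\sum_k a_{jk}(w)w_k=0$. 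That is the right key idea.

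There are, however, two concrete problems in your execution of the radial--radial piece. (i) The final integration by parts via $v\cdot\nabla f=\operatorname{div}(vf)-3f$ would fail as described: the derivative must land somewhere, and it produces $\nabla^2f_2$ (or $\nabla^2 f_1$), which $\|\cdot\|_A$ does not control --- \eqref{2-1} only gives first derivatives. (ii) The weight $\langle v\rangle^{\gamma+1}$ you claim for $\sum_{j,k}\partial^\beta\bar a_{jk}v_jv_k$ via Leibniz is not justified: the correction terms where one derivative falls on $v_jv_k$ have the form $\sum_k\partial^{\beta-e_l}\bar a_{lk}\,v_k$ and only carry weight $\langle v\rangle^{\gamma+2}$. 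Fortunately neither step is needed. Since $\sum_k a_{jk}(w)w_k=\sum_{j,k}a_{jk}(w)w_jw_k=0$, one has the exact identity
\begin{equation*}
\sum_{j,k}\partial^\beta\bar a_{jk}(v)\,v_jv_k=\sum_{j,k}\int_{\mathbb R^3}a_{jk}(v-v')\,v_j'v_k'\,\partial^\beta\mu(v')\,dv',
\end{equation*}
which, combined with $|a_{jk}(w)|\le|w|^{\gamma+2}$, the Hermite-type bounds on $\partial^\beta\mu$ and \eqref{2-4}, gives $\bigl|\sum_{j,k}\partial^\beta\bar a_{jk}v_jv_k\bigr|\lesssim C^{|\beta|}\sqrt{\beta!}\,\langle v\rangle^{\gamma+2}$. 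Because $|v\cdot\nabla f_i|=|v|\,|\mathbf P_v\nabla f_i|$, your identity then bounds the radial--radial integrand by $C^{|\beta|}\sqrt{\beta!}\,\langle v\rangle^{\gamma+2}|v|^{-2}|\mathbf P_v\nabla f_1||\mathbf P_v\nabla f_2|$: for $|v|\ge1$ the weight is at most $\langle v\rangle^{\gamma}$ and closes with $\|\mathbf P_v\nabla f_i\|_{2,\frac{\gamma}{2}}$ from \eqref{2-1}, while for $|v|\le1$ the bound $\bigl|\sum\partial^\beta\bar a_{jk}v_jv_k\bigr|\lesssim|v|^2$ (smoothness of $\bar a_{jk}$ near the origin) removes the singularity entirely, with no integration by parts. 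With these repairs your proof goes through.
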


By using the results of the coefficients to the linear Landau operator in~\cite{L-5}, we can obtain the following estimates. Firstly, for any $\gamma>-3$ and $\delta>0$, we have
\begin{equation}\label{2-4}
    \int_{\mathbb R^3}|v-w|^{\gamma}e^{-\delta|w|^2}dw\lesssim\langle v\rangle^{\gamma}.
\end{equation}

\begin{lemma}\label{lemma2.1}
    Let $f\in\mathcal S(\mathbb R^3)$, and $-3<\gamma<0$, then for any $0<\epsilon_1<1$, there exists a constant $C_{\epsilon_1}>0$ such that
    \begin{equation*}
       (1-\epsilon_1)\|f\|^2_{A}\le (\mathcal L_1f,  f)_{L^2}+C_{\epsilon_1}\|f\|^2_{2, \frac{\gamma}{2}}.
    \end{equation*}
\end{lemma}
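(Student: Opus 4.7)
The plan is to compute $(\mathcal{L}_1 f, f)_{L^2}$ explicitly, recognize its principal part as $\|f\|_A^2$, and absorb the remaining drift-type contribution using a weight interpolation together with the coercivity \eqref{2-2}.

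First, I would apply the decomposition \eqref{1-3} and integrate by parts in the diffusion term. The diffusion piece produces $\int A\nabla f\cdot\nabla f\,dv$, the multiplication piece is already $\tfrac14 \sum_{j,k}\int \bar a_{jk} v_j v_k f^2\,dv$, and together these reconstruct $\|f\|_A^2$. What remains is a single lower-order term of the form $-\tfrac12\int [\nabla_v\!\cdot\!(Av)]\, f^2\,dv$.

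Next, I would identify $\nabla_v\!\cdot\! (Av)$ with a quantity controlled by \eqref{A}. Writing $v_k=(v-w)_k+w_k$ in the convolution defining $\bar a_{jk}v_k$ and using the algebraic constraint $\sum_k a_{jk}(u)u_k = 0$, one gets
\[
\sum_k \bar a_{jk}(v)v_k \;=\; \sum_k \int a_{jk}(v-w)\,w_k\,\mu(w)\,dw;
\]
differentiating in $v_j$ and summing in $j$ then produces $\nabla_v\!\cdot\!(Av) = \sum_{j,k}(\partial_j a_{jk})*(v_k\mu)$. The first bound in \eqref{A} with $\beta=0$ immediately gives the pointwise estimate $|\nabla_v\!\cdot\!(Av)|\lesssim \langle v\rangle^{\gamma+1}$.

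Finally, I would absorb the drift term by weight interpolation. The elementary inequality $\langle v\rangle\le \epsilon\langle v\rangle^2 + C_\epsilon$ yields $\langle v\rangle^{\gamma+1}\le \epsilon\langle v\rangle^{\gamma+2} + C_\epsilon\langle v\rangle^\gamma$, so that
\[
\Bigl|\tfrac12\!\int [\nabla_v\!\cdot\!(Av)] f^2\,dv\Bigr| \;\lesssim\; \epsilon\,\|f\|_{2,1+\gamma/2}^2 + C_\epsilon\,\|f\|_{2,\gamma/2}^2 \;\le\; \epsilon\,C_1^{-2}\|f\|_A^2 + C_\epsilon\,\|f\|_{2,\gamma/2}^2,
\]
where the last bound invokes \eqref{2-2}. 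Choosing $\epsilon$ small enough in terms of $\epsilon_1$ then gives the claimed inequality. The only substantive step is the second one: crudely bounding $\bar a_{jk}$ and $v_k$ separately would only yield $|\nabla_v\!\cdot\!(Av)|\lesssim \langle v\rangle^{\gamma+2}$, which is borderline and leaves no room for absorption into $\epsilon_1\|f\|_A^2$. Extracting the extra power of $\langle v\rangle$ via the convolution identity --- which is exactly what the first estimate in \eqref{A} encodes --- is what makes the interpolation close.
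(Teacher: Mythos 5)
Your proof is correct and follows essentially the same route as the paper: integrate by parts using \eqref{1-3} to reconstruct $\|f\|_A^2$, identify the leftover drift term with $\sum_{j,k}\partial_j a_{jk}*(v_k\mu)$ via the null identity \eqref{2-3}, bound it by $\langle v\rangle^{\gamma+1}$ through \eqref{A}, and absorb it using the coercivity \eqref{2-2}. The only cosmetic difference is the absorption step, where you split the weight pointwise as $\langle v\rangle^{\gamma+1}\le\epsilon\langle v\rangle^{\gamma+2}+C_\epsilon\langle v\rangle^{\gamma}$ while the paper applies Cauchy--Schwarz in $v$ followed by Young's inequality; both yield the same bound $\epsilon_1\|f\|_A^2+C_{\epsilon_1}\|f\|^2_{2,\gamma/2}$.
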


\begin{proof}
    By using the representation \eqref{1-3}, and integrating by parts, we have
    \begin{equation*}
        \begin{split}
            -(\mathcal L_1f, f)_{L^2}&=-\int_{\mathbb R^3}\left(\bar a_{jk}\partial_jf\partial_kf+\frac14\bar a_{jk}v_jv_kf^2\right)-\frac12\int_{\mathbb R^3}\partial_j(\bar a_{jk}v_k)f^2\\
            &=-\|f\|^2_{A}+R_0.
        \end{split}
    \end{equation*}
    Since
    \begin{equation}\label{2-3}
        \sum_{j}a_{jk}v_j=\sum_{k}a_{jk}v_k=0,
    \end{equation}
    we have
    \begin{equation*}
        \partial_j(\bar a_{jk}v_k)=\partial_j(a_{jk}*(v_k\mu))=\partial_ja_{jk}*(v_k\mu).
    \end{equation*}
    Therefore from \eqref{A} and the Cauchy-Schwarz inequality, it follows that
    \begin{equation*}
        |R_0|\lesssim\int_{\mathbb R^3}\langle v\rangle^{\gamma+1}f^2(v)dv\lesssim\|f\|_{2, \frac{\gamma}{2}}\|f\|_{2, 1+\frac{\gamma}{2}},
    \end{equation*}
    then by using \eqref{2-2} and the Cauchy-Schwarz inequality, for any $0<\epsilon_1<1$, we have
    \begin{equation*}
        |R_0|\le C_2\|f\|_{2, \frac{\gamma}{2}}\|f\|_{A}\le\epsilon_1\|f\|^2_{A}+\frac{4C^2_2}{\epsilon_1}\|f\|^2_{2, \frac{\gamma}{2}}.
    \end{equation*}
    Let $C_{\epsilon_1}=\frac{4C^2_2}{\epsilon_1}$, then we can conclude
    \begin{equation*}
        (1-\epsilon_1)\|f\|^2_{A}\le (\mathcal L_1f,  f)_{L^2}+C_{\epsilon_1}\|f\|^2_{2, \frac{\gamma}{2}}.
    \end{equation*}
\end{proof}

\begin{prop}\label{prop L}
    Let $f_1, f_2\in\mathcal S(\mathbb R^3)$ and $-3<\gamma<0$, then there exists a constant $C_2>0$ such that
    \begin{equation*}
        |(\mathcal L_1f_1, f_2)_{L^2}|\le C_2\|f_1\|_A\|f_2\|_A.
    \end{equation*}
\end{prop}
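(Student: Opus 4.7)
The plan is to expand $(\mathcal{L}_1 f_1, f_2)_{L^2}$ using the divergence form \eqref{1-3}, integrate by parts once in the principal (diffusion) term, and show that each of the resulting three pieces is bounded by $\|f_1\|_A \|f_2\|_A$. After integration by parts I expect to obtain
\begin{equation*}
(\mathcal{L}_1 f_1,f_2)_{L^2}=\sum_{j,k}\int \bar a_{jk}\,\partial_k f_1\,\partial_j f_2\,dv+\sum_{j,k}\int \bar a_{jk}\frac{v_j v_k}{4}f_1 f_2\,dv-\sum_{j,k}\int \partial_j\!\Bigl(\bar a_{jk}\frac{v_k}{2}\Bigr)f_1 f_2\,dv,
\end{equation*}
so the task reduces to estimating these three terms separately. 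This is the natural bilinear generalisation of the computation appearing in the proof of Lemma \ref{lemma2.1}.

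For the first (gradient--gradient) term I would simply invoke the estimate \eqref{0} with $\beta=0$, which gives the bound $\|f_1\|_A\|f_2\|_A$ essentially by definition. For the second term I plan to exploit the fact that $A(v)$ is a symmetric non-negative matrix, so the scalar $\sum_{j,k}\bar a_{jk}v_j v_k/4$ is pointwise non-negative; applying Cauchy--Schwarz with respect to the measure $\bigl(\sum_{j,k}\bar a_{jk}v_j v_k/4\bigr)dv$ yields exactly the $A$-norm quadratic piece of $f_1$ and $f_2$, which are controlled by $\|f_i\|_A$ by definition of the norm.

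The third term is the one that requires the cancellation $\sum_j a_{jk}v_j=0$ from \eqref{2-3}, exactly as in Lemma \ref{lemma2.1}: it reduces $\partial_j(\bar a_{jk}v_k)$ to $(\partial_j a_{jk})*(v_k\mu)$, and the estimate \eqref{A} (with $\beta=0$) then produces the pointwise bound $|\sum_{j,k}\partial_j(\bar a_{jk}v_k/2)|\lesssim \langle v\rangle^{\gamma+1}$. A Cauchy--Schwarz inequality followed by the inequality $\|f\|_{2,(\gamma+1)/2}\le \|f\|_{2,1+\gamma/2}$ (valid since $(\gamma+1)/2\le 1+\gamma/2$) and the coercivity estimate \eqref{2-2} finishes the bound by $\|f_1\|_A\|f_2\|_A$.

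I do not expect a genuine obstacle: all three pieces are controlled by ingredients that are already available in the excerpt, and the proof is essentially a polarisation of Lemma \ref{lemma2.1}. The only point requiring a little care is the second term, where one must avoid losing a power of $\langle v\rangle$ by choosing the Cauchy--Schwarz splitting that respects the positive-definite structure of $A$, rather than majorising $\sum \bar a_{jk}v_j v_k$ crudely by $\langle v\rangle^{\gamma+1}$ as one does for the third term.
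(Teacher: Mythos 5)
Your decomposition and the estimates for the first and third terms are exactly those of the paper: the gradient--gradient term is handled by \eqref{0} with $\beta=0$, and the term $\partial_j(\bar a_{jk}v_k)$ is reduced via \eqref{2-3} to $\partial_j a_{jk}*(v_k\mu)$, bounded pointwise by $\langle v\rangle^{\gamma+1}$ using \eqref{A}, and closed with Cauchy--Schwarz and \eqref{2-2}. The only divergence is the middle term $\frac14\int\bar a_{jk}v_jv_kf_1f_2$: you bound it by Cauchy--Schwarz with respect to the non-negative measure $\frac14\sum\bar a_{jk}v_jv_k\,dv$, which lands directly on the quadratic part of the $\|\cdot\|_A$ norm; this is correct and arguably cleaner, since it uses only the positivity of $A$ and the definition of $\|\cdot\|_A$. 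The paper instead treats this term exactly like the third one, majorising $|\sum\bar a_{jk}v_jv_k|$ by $\langle v\rangle^{\gamma+1}$ (the second estimate of \eqref{A} with $\beta=0$) and then using Cauchy--Schwarz together with $\|f\|_{2,\frac{\gamma+1}{2}}\le\|f\|_{2,1+\frac{\gamma}{2}}$ and \eqref{2-2}; so your caution that this ``crude'' bound would lose a power of $\langle v\rangle$ is unfounded --- it closes just as well, because $\langle v\rangle^{\gamma+1}$ splits into two factors each dominated by $\langle v\rangle^{1+\frac{\gamma}{2}}$, which the $A$-norm controls. (The sign you carry on the third term differs from the paper's display, but since only absolute values enter the estimates this is immaterial.)
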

\begin{proof}
    By using the representation \eqref{1-3}, and integrating by parts, we have
    \begin{equation*}
        \begin{split}
            (\mathcal L_1f_1, f_2)_{L^2}&=\int_{\mathbb R^3}\bar a_{jk}\partial_jf_1\partial_kf_2+\frac14\int_{\mathbb R^3}\bar a_{jk}v_jv_kf_1f_2\\
            &\quad+\frac12\int_{\mathbb R^3}\partial_j(\bar a_{jk}v_k)f_1f_2\\
            &=R_1+R_2+R_3.
        \end{split}
    \end{equation*}
    Since $-3<\gamma<0$, by the inequality \eqref{0}, we obtain
    \begin{equation*}
        \begin{split}
            |R_1|\lesssim\|f_1\|_A\|f_2\|_A.
        \end{split}
    \end{equation*}
    For the term $R_2$ and $R_3$, using \eqref{2-3}, then from \eqref{a} and \eqref{A}, it follows that
    \begin{equation*}
        |R_2|+|R_3|\lesssim\int_{\mathbb R^3}\langle v\rangle^{\gamma+1}|f_1(v)f_2(v)|dv,
    \end{equation*}
    then using the Cauchy-Schwarz inequality and \eqref{2-2}, we have
    \begin{equation*}
        \begin{split}
            |R_2|+|R_3|\lesssim\|f_1\|_{2, 1+\frac{\gamma}{2}}\|f_2\|_{2, 1+\frac{\gamma}{2}}\lesssim\|f_1\|_A\|f_2\|_A.
        \end{split}
    \end{equation*}
    Finally, combining $R_1-R_3$ to get
    \begin{equation*}
        |(\mathcal L_1f_1, f_2)_{L^2}|\le C_2\|f_1\|_A\|f_2\|_A.
    \end{equation*}
\end{proof}

Now, we shall estimate $(\mathcal L_2f_1, f_2)_{L^2}$. Firstly, we give the representation of the operator $\mathcal L_2$. For $f\in\mathcal S(\mathbb R^3)$, from \eqref{Q}, it follows that
    \begin{equation}\label{K}
        \begin{split}
            \mathcal L_2f&=-\mu^{-\frac12}Q(\sqrt\mu f,\mu)\\
            &=\mu^{-\frac12}\partial_j\left(\int_{\mathbb R^3}a_{jk}(v-v')\left[\mu^{\frac12}(v')f(v')v_k+\partial_k\left(\mu^{\frac12}f\right)(v')\right]dv'\mu(v)\right)\\
            &=\mu^{-\frac12}\partial_j\left[\mu\left(a_{jk}*(v_k\mu^{\frac12}f)+\partial_ka_{jk}*(\mu^{\frac12}f)\right)\right].
        \end{split}
    \end{equation}

\begin{prop}\label{prop K}
    Let $f_1, f_2\in\mathcal S(\mathbb R^3)$ and $-3<\gamma<0$, then there exists a constant $C_{3}>0$ such that
    \begin{equation}\label{L2}
        |(\mathcal L_2f_1, f_2)_{L^2}|\le C_3\left(\|f_1\|_{2, \frac{\gamma}{2}}\|f_2\|_A+\|f_1\|_A\|f_2\|_{2, \frac{\gamma}{2}}\right).
    \end{equation}
\end{prop}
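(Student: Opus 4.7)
The plan is to start from the explicit representation \eqref{K} of $\mathcal{L}_2 f_1$ and integrate by parts in $v_j$, transferring the outer $\partial_j$ onto the test function side. Using $\partial_j(\mu^{-1/2} f_2) = \mu^{-1/2}(\partial_j f_2 + \tfrac{v_j}{2} f_2)$, this yields
\[
(\mathcal{L}_2 f_1, f_2)_{L^2} = -\sum_{j,k=1}^3 \int_{\mathbb{R}^3} \sqrt{\mu}(v)\bigl[a_{jk}*(v_k\sqrt{\mu}f_1) + \partial_k a_{jk}*(\sqrt{\mu}f_1)\bigr](v)\bigl(\partial_j f_2 + \tfrac{v_j}{2} f_2\bigr)(v)\,dv,
\]
a sum of four double integrals in $(v,v')$ to estimate.

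Next, I would use the pointwise bounds $|a_{jk}(u)|\le 3|u|^{\gamma+2}$ and $|\partial_k a_{jk}(u)|\lesssim |u|^{\gamma+1}$ (the latter following from the direct calculation $\sum_k \partial_k a_{jk}(u) = -2 u_j|u|^\gamma$), so each integral is majorized by one of the form
\[
\iint_{\mathbb{R}^6} |v-v'|^{\gamma+p}\sqrt{\mu}(v)\sqrt{\mu}(v')\,|f_1(v')|\,\bigl(|\nabla f_2(v)|+|v||f_2(v)|\bigr)\,dv\,dv'
\]
with $p\in\{1,2\}$. The main step is to apply Cauchy--Schwarz jointly in $(v,v')$ with weight split $w_1(v,v')=\langle v'\rangle^{\gamma}\langle v\rangle^{-\gamma}$, $w_2 = 1/w_1$, so that the $f_1$-factor carries the weight $\langle v'\rangle^{\gamma}$ and the $f_2$-factor carries $\langle v\rangle^{\gamma}$.

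In each resulting factor, one dummy variable is integrated out using \eqref{2-4}: for example, $\int|v-v'|^{\gamma+p}\sqrt{\mu}(v)\langle v\rangle^{-\gamma}dv\lesssim \int|v-v'|^{\gamma+p}\mu^{1/4}(v)dv\lesssim \langle v'\rangle^{\gamma+p}$, valid because $\gamma+p>-3$ for $\gamma\in(-3,0)$ and $p\ge 1$, and because $\sqrt{\mu}\langle\cdot\rangle^{-\gamma}\le C\mu^{1/4}$ (the Gaussian absorbs any polynomial). After absorbing the residual polynomial factors into the remaining Gaussians, the estimate reduces to $\|f_1\|^2_{2,\gamma/2}$ on one side and $\|\nabla f_2\|^2_{2,\gamma/2}+\|f_2\|^2_{2,1+\gamma/2}$ on the other, the latter controlled by $\|f_2\|^2_A$ via \eqref{2-2}. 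This already gives the stronger one-sided bound $|(\mathcal{L}_2 f_1, f_2)_{L^2}|\le C\|f_1\|_{2,\gamma/2}\|f_2\|_A$, which implies the stated inequality; the dual term $\|f_1\|_A\|f_2\|_{2,\gamma/2}$ arises by rewriting $v_k\sqrt{\mu}f_1 = -2\partial_k(\sqrt{\mu}f_1)+2\sqrt{\mu}\partial_k f_1$ and integrating by parts on the $v'$-side to transfer a derivative onto $f_1$ before the joint Cauchy--Schwarz.

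The hard part will be resisting the simpler approach of first bounding the convolutions pointwise (Cauchy--Schwarz in $v'$ alone) and then applying Cauchy--Schwarz in $v$: that route produces a doubled exponent $2(\gamma+1)$ in the $\partial_k a_{jk}$ term, which requires $\gamma>-5/2$ and thus fails on the lower part $\gamma\in(-3,-5/2]$ of the soft-potential range. The joint $(v,v')$ Cauchy--Schwarz is essential precisely because it keeps the singular kernel $|v-v'|^{\gamma+p}$ undoubled, allowing \eqref{2-4} to apply for the entire range $\gamma\in(-3,0)$. A secondary bookkeeping point is the systematic absorption of the algebraic weights $\langle\cdot\rangle^{\pm\gamma}$ into the Gaussian factors, which is legitimate because $\mu^{1/4}(w)\langle w\rangle^m$ is bounded for every $m\in\mathbb{R}$.
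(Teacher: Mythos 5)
Your proposal is correct, but it takes a genuinely different route from the paper's proof on the decisive terms. The paper, after the same first integration by parts against $\mu^{-1/2}f_2$, integrates by parts a second time on the $\partial_k a_{jk}$ contribution, producing three terms with kernels $|v-v'|^{\gamma+2}$, $|v-v'|^{\gamma}$ and $|v-v'|^{\gamma+1}$; it then treats them separately: the first by pointwise Cauchy--Schwarz in $v'$ (doubling the exponent, harmless since $2(\gamma+2)>-3$), the second by a dyadic decomposition plus the Hardy--Littlewood maximal function, and the third by H\"older with the kernel in $L^{3/2}$ combined with the interpolation/Sobolev bound $\|f_1\|_{3,\gamma/2}\lesssim\|f_1\|_A$ --- this last step is precisely where the second term $\|f_1\|_A\|f_2\|_{2,\gamma/2}$ in \eqref{L2} comes from. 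You instead keep the two convolution terms paired against $\partial_j f_2+\tfrac{v_j}{2}f_2$ (so your worst kernel is $|v-v'|^{\gamma+1}$, and your identity $\sum_k\partial_k a_{jk}(u)=-2u_j|u|^\gamma$ checks out) and estimate everything by a single weighted Cauchy--Schwarz/Schur argument in the joint variables $(v,v')$, never squaring the singular kernel; since both sides carry Gaussian factors that absorb every polynomial weight via \eqref{mu}, and \eqref{2-4} applies because $\gamma+p>-3$, the two resulting factors reduce to $\|f_1\|^2_{2,\gamma/2}$ and $\|\nabla f_2\|^2_{2,\gamma/2}+\|f_2\|^2_{2,1+\gamma/2}\lesssim\|f_2\|^2_A$ by \eqref{2-2}. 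I verified the weight bookkeeping and it closes. What your approach buys: it avoids both the maximal function and the Sobolev embedding, and it yields the stronger one-sided bound $|(\mathcal L_2 f_1,f_2)_{L^2}|\lesssim\|f_1\|_{2,\gamma/2}\|f_2\|_A$ on the full range $-3<\gamma<0$, whereas the paper records that bound only for $\gamma>-5/2$ in its Remark (1) --- your diagnosis of why the naive "pointwise Cauchy--Schwarz in $v'$ first" route fails below $\gamma=-5/2$ matches exactly the obstruction the paper circumvents with its $L^3$ interpolation. The closing aside about recovering the dual term $\|f_1\|_A\|f_2\|_{2,\gamma/2}$ via $v_k\sqrt{\mu}f_1=2\sqrt{\mu}\,\partial_k f_1-2\partial_k(\sqrt{\mu}f_1)$ is consistent but unnecessary, since your one-sided bound already implies \eqref{L2}.
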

\begin{proof}
    Using integration by parts with \eqref{K}, we have
    \begin{equation*}
        \begin{split}
            (\mathcal L_2f_1, f_2)_{L^2}&=-\left(a_{jk}*(v_k\mu^{\frac12}f_1), \mu^{\frac12}\left(\frac{v_j}{2}f_2+\partial_jf_2\right)\right)_{L^2}\\
            &\quad+\left(\partial_{jk}a_{jk}*(\mu^{\frac12}f_1), \mu^{\frac12}f_2\right)_{L^2}-\left(\partial_ka_{jk}*(\mu^{\frac12}f_1), \mu^{\frac12}v_jf_2\right)_{L^2}\\
            &=I_1+I_2+I_3.
        \end{split}
    \end{equation*}
    Since
    \begin{equation}\label{2-5}
        |\partial^\alpha a_{jk}(v)|\le|v|^{\gamma+2-|\alpha|}, \quad \forall\alpha\in\mathbb N^3,
    \end{equation}
    and 
    \begin{equation}\label{mu}
        \langle v\rangle^\beta\mu^\rho(v)\in L^\infty(\mathbb R^3),\quad \forall\beta\in\mathbb R, \rho>0,
    \end{equation}
    by Cauchy-Schwarz inequality, we obtain
    \begin{equation*}
        \begin{split}
            \left|a_{jk}*(v_k\mu^{\frac12}f_1)\right|&\le\int_{\mathbb R^3}|v-v'|^{\gamma+2}\langle v'\rangle^{1-\frac{\gamma}{2}}\mu^{\frac12}(v')\langle v'\rangle^{\frac{\gamma}{2}}|f_1(v')|dv'\\
            &\lesssim\left(\int_{\mathbb R^3}|v-v'|^{2(\gamma+2)}\mu^{\frac12}(v')dv'\right)^{\frac12}\|f_1\|_{2, \frac{\gamma}{2}}.
        \end{split}
    \end{equation*}
    For any $-3<\gamma<0$, we have $2(\gamma+2)>-3$, then by using the inequality \eqref{2-4}, it follows that
    \begin{equation*}
        \begin{split}
            \left|a_{jk}*(v_k\mu^{\frac12}f_1)\right|\lesssim\langle v\rangle^{\gamma+2}\|f_1\|_{2, \frac{\gamma}{2}}.
        \end{split}
    \end{equation*}
    Using the Cauchy-Schwarz inequality and the inequality \eqref{2-2}, we can conclude
    \begin{equation*}
        \begin{split}
            |I_1|&\lesssim\|f_1\|_{2,\frac{\gamma}{2}}\int_{\mathbb R^3}\langle v\rangle^{\gamma+3}\mu^{\frac12}(v)\left(|f_2(v)|+|\nabla f_2(v)|\right)dv\\
            &\lesssim\|f_1\|_{2,\frac{\gamma}{2}}\left(\|f_2\|_{2,1+\frac{\gamma}{2}}+\|\nabla f_2\|_{2, \frac{\gamma}{2}}\right)\lesssim\|f_1\|_{2, \frac{\gamma}{2}}\|f_2\|_{A}.
        \end{split}
    \end{equation*}
    For the term $I_2$, from \eqref{2-5}, one has
    \begin{equation*}
        \begin{split}
            \left|\partial_{jk}a_{jk}*(\mu^{\frac12}f_1)\right|\lesssim\int_{\mathbb R^3}|v-v'|^\gamma\mu^{\frac12}(v')\left|f_1(v')\right|dv'.
        \end{split}
    \end{equation*}
    Consider two set $\{|v-v'|\le 1\}$ and $\{|v-v'|\ge 1\}$, that is 
     \begin{equation*}
        \begin{split}
            \int_{\mathbb R^3}|v-v'|^\gamma\mu^{\frac12}(v')\left|f_1(v')\right|dv'=\int_{|v-v'|\le 1}+\int_{|v-v'|\ge 1}=A_{1}+A_{2}.
        \end{split}
    \end{equation*}
    For the term $A_{1}$, since $-3<\gamma<0$, we have
    \begin{equation*}
        \begin{split}
            A_{1}&=\sum_{j\le0}\int_{2^{j-1}\le|v-v'|\le2^{j}}|v-v'|^\gamma\mu^{\frac12}(v')\left|f_1(v')\right|dv'\\ 
            &\le\sum_{j\le0}\left(2^{j-1}\right)^{\gamma}\int_{|v-v'|\le2^{j}}\mu^{\frac12}(v')\left|f_1(v')\right|dv'\\ 
            &=8\sum_{j\le0}\left(2^{j-1}\right)^{\gamma+3}\frac{1}{2^{3j}}\int_{|v-v'|\le2^{j}}\mu^{\frac12}(v')\left|f_1(v')\right|dv'\\ 
            &\le8\sum_{j\le0}\left(2^{j-1}\right)^{\gamma+3}M(\mu^{\frac12}f_{1})\lesssim M(\mu^{\frac12}f_{1})
        \end{split}
    \end{equation*}
    where $M$ is the Hardy-Littlewood maximal function. For term $A_{2}$, from \eqref{2-4},
     \begin{equation*}
        \begin{split}
            A_{2}&=\int_{|v-v'|\ge 1}|v-v'|^\gamma\mu^{\frac12}(v')\left|f_1(v')\right|dv'\\ 
            &\le\int_{\mathbb R^{3}}|v-v'|^{\gamma+2}\mu^{\frac12}(v')\left|f_1(v')\right|dv'\\ 
            &\lesssim\left(\int_{\mathbb R^3}|v-v'|^{2(\gamma+2)}\mu^{\frac12}(v')dv'\right)^{\frac12}\|f_1\|_{2, \frac{\gamma}{2}}\lesssim\langle v\rangle^{\gamma+2}\|f_1\|_{2, \frac{\gamma}{2}}.
        \end{split}
    \end{equation*}
    Combining $A_{1}$ and $A_{2}$, using Cauchy-Schwarz inequality to get
    \begin{equation*}
        \begin{split}
            |I_{2}|&\lesssim\int_{\mathbb R^{3}}|M(\mu^{\frac12}f_{1})(v)\mu^{\frac12}(v)f_{2}(v)|dv+\|f_1\|_{2, \frac{\gamma}{2}}\int_{\mathbb R^{3}}\langle v\rangle^{\gamma+2}\mu^{\frac12}(v)|f_{2}(v)|dv\\ 
            &\lesssim\|M(\mu^{\frac12}f_{1})\|_{L^{2}}\|\mu^{\frac12}f_{2}\|_{L^{2}}+\|f_1\|_{2, \frac{\gamma}{2}}\|f_2\|_{2, \frac{\gamma}{2}}\\
            &\lesssim\|\mu^{\frac12}f_{1}\|_{L^{2}}\|\mu^{\frac12}f_{2}\|_{L^{2}}+\|f_1\|_{2, \frac{\gamma}{2}}\|f_2\|_{2, \frac{\gamma}{2}}\\ 
            &\lesssim\|f_1\|_{2, \frac{\gamma}{2}}\|f_2\|_{2, \frac{\gamma}{2}}\lesssim\|f_1\|_{2, \frac{\gamma}{2}}\|f_2\|_A.
        \end{split}
    \end{equation*}
    For $I_3$, from \eqref{2-5}, we have
    \begin{equation*}
        \begin{split}
            \left|\partial_ka_{jk}*(\mu^{\frac12}f_1)\right|\lesssim\int_{\mathbb R^3}|v-v'|^{\gamma+1}\mu^{\frac12}(v')|f_1(v')|dv'.
        \end{split}
    \end{equation*}
    Note that
    $$\frac32(\gamma+1)>-3,$$
    with $-3<\gamma<0$. Using H$\ddot{\rm o}$lder's inequality, \eqref{mu} and \eqref{2-4}, we have
    \begin{equation*}
        \begin{split}
            \left|\partial_ka_{jk}*(\mu^{\frac12}f_1)\right|&\lesssim\left(\int_{\mathbb R^3}|v-v'|^{\frac32(\gamma+1)}\mu^{\frac12}(v')dv'\right)^{\frac23}\|f_1\|_{3, \frac{\gamma}{2}}\lesssim\langle v\rangle^{\gamma+1}\|f_1\|_{3, \frac{\gamma}{2}}.
        \end{split}
    \end{equation*}
         Now, we want to show $\|f_1\|_{3, \frac{\gamma}{2}}$ can be bounded by $\|f_1\|_A$. By applying H$\ddot{\rm o}$lder's inequality, $\langle v\rangle^{\gamma/2}f_1(v)$ in $L^3(\mathbb R^3)$ can be bounded by
    \begin{equation*}
        \left(\|\langle\cdot\rangle^{\gamma/2}f_1\|_{L^2}\|\langle\cdot\rangle^{\gamma/2}f_1\|_{L^6}\right)^{\frac12},
    \end{equation*}
    and Sobolev embedding implies 
    $$\|\langle\cdot\rangle^{\gamma/2}f_1\|_{L^6}\lesssim\|\nabla[\langle\cdot\rangle^{\gamma/2}f_1]\|_{L^2},$$
    thus we get
    \begin{equation*}
        \|f_1\|_{3, \frac{\gamma}{2}}\lesssim\left(\|\langle\cdot\rangle^{\gamma/2}f_1\|_{L^2}\|\nabla[\langle\cdot\rangle^{\gamma/2}f_1]\|_{L^2}\right)^{\frac12}.
    \end{equation*}
    Notice that
    \begin{equation*}
        \nabla[\langle v\rangle^{\gamma/2}f_1(v)]=\frac{\gamma}{2}\langle v\rangle^{\gamma/2-2}f_1(v)v+\langle v\rangle^{\gamma/2}\nabla f_1(v),
    \end{equation*}
    from \eqref{2-2}, we have
    \begin{equation*}
        \begin{split}
            \|\nabla[\langle\cdot\rangle^{\gamma/2}f_1]\|_{L^2}\le\left\|\frac{\gamma}{2}\langle v\rangle^{\gamma/2-2}f_1v\right\|_{L^2}+\|\langle\cdot\rangle^{\gamma/2}\nabla f_1\|_{L^2}\lesssim\|f_1\|_A,
        \end{split}
    \end{equation*}
    which implies 
    \begin{equation}\label{inequality}
        \|f_1\|_{3, \frac{\gamma}{2}}\lesssim\|f_1\|_A. 
    \end{equation}
    Finally, using the Cauchy-Schwarz inequality and \eqref{inequality} to get
    \begin{equation*}
        \begin{split}
            |I_3|\lesssim\|f_1\|_A\int_{\mathbb R^3}\langle v\rangle^{\gamma+2}\mu^{\frac12}(v)|f_2(v)|dv\lesssim\|f_1\|_A\|f_2\|_{2, \frac{\gamma}{2}}.
        \end{split}
    \end{equation*}
    Combining $I_1-I_3$, we obtain
    \begin{equation*}
        |(\mathcal L_2f_1, f_2)_{L^2}|\le C_3\left(\|f_1\|_{2, \frac{\gamma}{2}}\|f_2\|_A+\|f_1\|_A\|f_2\|_{2, \frac{\gamma}{2}}\right).
    \end{equation*}
\end{proof}

\begin{remark}
    (1). For $f_1, f_2\in\mathcal S(\mathbb R^3)$ and $\gamma>-5/2$, we have
    \begin{equation*}
        |(\mathcal L_2f_1, f_2)_{L^2}|\lesssim\|f_1\|_{2, \frac{\gamma}{2}}\|f_2\|_A.
    \end{equation*}
    (2). For $-3<\gamma<0$, if $f_1=f_2$, then for any $\epsilon_2>0$, there exists a constant $C_{\epsilon_2}>0$ such that
    \begin{equation}\label{f1}
        |(\mathcal L_2f_1, f_1)_{L^2}|\le\epsilon_2\|f_1\|^2_A+C_{\epsilon_2}\|f_1\|^2_{2, \frac{\gamma}{2}}.
    \end{equation}
    (3). From \eqref{2-2}, 
    \begin{equation}\label{12}
        |(\mathcal L_2f_1, f_2)_{L^2}|\le C_4\|f_1\|_{A}\|f_2\|_A.
    \end{equation}
\end{remark}

\section{Energy Estimates}

In this section, we study the energy estimates of the solution to the Cauchy problem \eqref{1-2}.

\begin{lemma}\label{lemma3.1}
    For $-3<\gamma<0$. Lef $f$ be the solution of Cauchy problem \eqref{1-2}. Assume $f_0\in L^2(\mathbb R^3)$. Then there exists a constant $C_5>0$ such that for any $T>0$ and $t\in]0,T]$,
    \begin{equation*}
        \|f(t)\|^2_{L^2(\mathbb R^3)}+\int_0^t\|f(s)\|^2_Ads\le\left(C_5\right)^2.
    \end{equation*}
\end{lemma}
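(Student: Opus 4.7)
The plan is to perform a standard $L^2$ energy estimate on the equation \eqref{1-2}. Take the $L^2(\mathbb R^3)$ inner product of both sides with $f(t)$ to obtain
\begin{equation*}
\frac{1}{2}\frac{d}{dt}\|f(t)\|^2_{L^2} + (\mathcal L_1 f, f)_{L^2} + (\mathcal L_2 f, f)_{L^2} = (g, f)_{L^2}.
\end{equation*}
For the coercive part, I would apply Lemma \ref{lemma2.1} with a small parameter $\epsilon_1 \in (0,1)$ (say $\epsilon_1 = 1/4$), giving
\begin{equation*}
(\mathcal L_1 f, f)_{L^2} \ge (1-\epsilon_1)\|f\|^2_A - C_{\epsilon_1}\|f\|^2_{2,\gamma/2}.
\end{equation*}
For the lower-order part, I would use the bound \eqref{f1} in the Remark with a small $\epsilon_2 > 0$ (chosen so that $\epsilon_1 + \epsilon_2 < 1$), giving
\begin{equation*}
|(\mathcal L_2 f, f)_{L^2}| \le \epsilon_2 \|f\|^2_A + C_{\epsilon_2}\|f\|^2_{2,\gamma/2}.
\end{equation*}

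Next, since $\gamma/2 < 0$ one has $\langle v\rangle^{\gamma/2} \le 1$, so $\|f\|^2_{2,\gamma/2} \le \|f\|^2_{L^2}$, allowing the weighted remainder terms to be absorbed into $\|f(t)\|^2_{L^2}$. Bounding the source by Cauchy--Schwarz, $|(g,f)_{L^2}| \le \tfrac{1}{2}\|g(t)\|^2_{L^2} + \tfrac{1}{2}\|f(t)\|^2_{L^2}$, and combining these estimates yields a differential inequality of the form
\begin{equation*}
\frac{d}{dt}\|f(t)\|^2_{L^2} + c\,\|f(t)\|^2_A \le C\,\|f(t)\|^2_{L^2} + \|g(t)\|^2_{L^2}
\end{equation*}
for some constants $c>0$ and $C>0$ depending only on the fixed choice of $\epsilon_1,\epsilon_2$.

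Then I would apply Gr\"onwall's inequality on $[0,T]$ to get
\begin{equation*}
\|f(t)\|^2_{L^2} \le e^{CT}\Bigl(\|f_0\|^2_{L^2} + \int_0^T \|g(s)\|^2_{L^2}\,ds\Bigr),
\end{equation*}
and then integrate the Gr\"onwall-absorbed inequality once more on $[0,t]$ to recover the bound on $\int_0^t \|f(s)\|^2_A\,ds$. Since $g$ is analytic in $(t,v)$, the quantity $\int_0^T\|g(s)\|^2_{L^2}\,ds$ is finite, so setting $C_5^2$ equal to the resulting right-hand side completes the estimate. There is no real obstacle here: the argument is the textbook energy method, and the only point to watch is the choice of $\epsilon_1$ and $\epsilon_2$ small enough that a strictly positive multiple of $\|f\|^2_A$ survives on the left-hand side after absorbing the contributions from both $\mathcal L_1$ and $\mathcal L_2$.
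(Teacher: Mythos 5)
Your proposal is correct and matches the paper's own proof essentially step for step: the same $L^2$ energy identity, coercivity via Lemma \ref{lemma2.1}, the bound \eqref{f1} for $\mathcal L_2$, absorption of the weighted norms using $\gamma<0$, boundedness of $\|g(t)\|_{L^2}$ from analyticity, and Gr\"onwall to produce $C_5$. No issues to flag.
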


\begin{proof}
    Since $f$ is the solution of Cauchy problem \eqref{1-2},
        \begin{equation*}
            \frac12\frac{d}{dt}\|f(t)\|^2_{L^2(\mathbb R^3)}+(\mathcal L_1f, f)_{L^2(\mathbb R^3)}=(g,f)_{L^2(\mathbb R^3)}-(\mathcal L_2f, f)_{L^2(\mathbb R^3)}.
        \end{equation*}
    Since $\gamma<0$, by using Lemma \ref{lemma2.1} and \eqref{f1}, we have
        \begin{equation*}
        \begin{split}
            &\frac{d}{dt}\|f(t)\|^2_{L^2(\mathbb R^3)}+2(1-\epsilon_1)\|f(t)\|^2_A\\
            &\le2\|f(t)\|_{L^2}\|g(t)\|_{L^2}+2\epsilon_2\|f(t)\|^2_A+2(C_{\epsilon_1}+C_{\epsilon_2})\|f(t)\|^2_{L^2(\mathbb R^3)}.
        \end{split}
        \end{equation*}
    Applying Cauchy-Schwarz inequality, and choosing
    $$\epsilon_1=\epsilon_2=\frac14,$$
    we can get
        \begin{equation*}
            \begin{split}
                \frac{d}{dt}\|f(t)\|^2_{L^2(\mathbb R^3)}+\|f(t)\|^2_A\le c_1\|f(t)\|^2_{L^2}+\|g(t)\|^2_{L^2},
            \end{split}
        \end{equation*}
    Since $g$ is analytic with respect to $t$ and $v$, for any $T>0$ and $t\in]0,T]$, there exists a constant $A>0$ such that
        \begin{equation}\label{3-1}
            \|g(t)\|_{L^2}\le A.
        \end{equation}
    Therefore by applying Gronwall inequality, for any $T>0$ and $t\in]0,T]$, taking $C_5\ge e^{\frac12 c_1T}\sqrt{\|f_0\|^2_{L^2}+TA^2}$, one can obtain
        \begin{equation*}
            \begin{split}
                \|f(t)\|^2_{L^2(\mathbb R^3)}+\int_0^t\|f(s)\|^2_Ads\le e^{c_1T}\left(\|f_0\|^2_{L^2}+TA^2\right)\le (C_5)^2.
            \end{split}
        \end{equation*}
\end{proof}

\begin{lemma}\label{lemma3.2}
    For $-3<\gamma<0$. Lef $f$ be the solution of Cauchy problem \eqref{1-2}. Assume $f_0\in L^2(\mathbb R^3)$. Then there exists a constant $C_6>0$ such that for any $T>0$ and $t\in]0,T]$,
    \begin{equation}\label{k=1}
        \|t\partial_tf\|^2_{L^\infty(]0, T]; L^2(\mathbb R^3))}+\int_0^T\|t\partial_tf\|^2_Adt\le\left(C_6\right)^2.
    \end{equation}
\end{lemma}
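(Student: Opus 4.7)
The plan is to differentiate equation \eqref{1-2} in $t$, perform an $L^2$ energy estimate with the weight $t^2$, and close the inequality by an auxiliary bound obtained from pairing the \emph{original} equation against $t\partial_tf$.

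Since $\mathcal{L}$ is independent of $t$, applying $\partial_t$ to \eqref{1-2} yields $\partial_t(\partial_tf)+\mathcal{L}(\partial_tf)=\partial_tg$. Taking the $L^2(\mathbb{R}^3)$ inner product with $t^2\partial_tf$ and using the identity $t^2(\partial_t^2f,\partial_tf)_{L^2}=\tfrac{1}{2}\tfrac{d}{dt}\bigl(t^2\|\partial_tf\|^2_{L^2}\bigr)-t\|\partial_tf\|^2_{L^2}$, I arrive at
\[
\tfrac{1}{2}\tfrac{d}{dt}\bigl(t^2\|\partial_tf\|^2_{L^2}\bigr)+t^2(\mathcal{L}\partial_tf,\partial_tf)_{L^2}=t\|\partial_tf\|^2_{L^2}+t^2(\partial_tg,\partial_tf)_{L^2}.
\]
Applying Lemma~\ref{lemma2.1} and \eqref{f1} to $\partial_tf$ with $\epsilon_1=\epsilon_2=1/4$, together with $\|\cdot\|_{2,\gamma/2}\le\|\cdot\|_{L^2}$ (a consequence of $\gamma<0$), one gets $(\mathcal{L}\partial_tf,\partial_tf)_{L^2}\ge\tfrac{1}{2}\|\partial_tf\|^2_A-c_1\|\partial_tf\|^2_{L^2}$. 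Treating $t^2(\partial_tg,\partial_tf)_{L^2}$ with Cauchy--Schwarz (noting that $\|\partial_tg\|_{L^2}$ is bounded on $[0,T]$ because $g$ is analytic), and integrating on $[0,T]$, yields
\[
\tfrac{T^2}{2}\|\partial_tf(T)\|^2_{L^2}+\tfrac{1}{2}\int_0^T t^2\|\partial_tf\|^2_A\,dt\le\int_0^T t\|\partial_tf\|^2_{L^2}\,dt+c_2\int_0^T t^2\|\partial_tf\|^2_{L^2}\,dt+CT^3.
\]

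The principal obstacle is the first term on the right: $\int_0^T t\|\partial_tf\|^2_{L^2}\,dt$ is one power of $t$ short of being absorbable into the $L^2_tA$ piece on the left. To estimate it, I would take the $L^2$ inner product of \eqref{1-2} itself with $t\partial_tf$, giving $t\|\partial_tf\|^2_{L^2}=-t(\mathcal{L}f,\partial_tf)_{L^2}+t(g,\partial_tf)_{L^2}$. By Proposition~\ref{prop L} and \eqref{12}, $|(\mathcal{L}f,\partial_tf)_{L^2}|\le C\|f\|_A\|\partial_tf\|_A$, so for any $\delta>0$,
\[
t|(\mathcal{L}f,\partial_tf)_{L^2}|\le\delta\,t^2\|\partial_tf\|^2_A+C_\delta\|f\|^2_A,\qquad t|(g,\partial_tf)_{L^2}|\le\tfrac{1}{4}t\|\partial_tf\|^2_{L^2}+t\|g\|^2_{L^2}.
\]
Integrating, invoking Lemma~\ref{lemma3.1} to bound $\int_0^T\|f\|^2_A\,dt\le C_5^2$, and absorbing $\tfrac{1}{4}\int_0^T t\|\partial_tf\|^2_{L^2}\,dt$ into the left gives
\[
\tfrac{3}{4}\int_0^T t\|\partial_tf\|^2_{L^2}\,dt\le\delta\int_0^T t^2\|\partial_tf\|^2_A\,dt+C.
\]

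Plugging this back into the earlier display and estimating $\int_0^T t^2\|\partial_tf\|^2_{L^2}\,dt\le T\int_0^T t\|\partial_tf\|^2_{L^2}\,dt$, one chooses $\delta$ small enough (depending on $T$) so that the resulting $\int_0^T t^2\|\partial_tf\|^2_A\,dt$ contributions on the right are absorbed into the $\tfrac{1}{2}\int_0^T t^2\|\partial_tf\|^2_A\,dt$ on the left. This produces \eqref{k=1} with $C_6$ depending on $T$, $C_5$, and $\sup_{[0,T]}\|\partial_tg\|_{L^2}$. The main technical point is the mismatch between the $t^2$ weight in the energy and the single $t$ weight of the remainder appearing when $\partial_t^2f$ is paired with $t^2\partial_tf$; this mismatch is precisely what forces the auxiliary step, and it is there that Lemma~\ref{lemma3.1} enters the argument.
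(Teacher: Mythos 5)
Your proof is correct and follows essentially the same route as the paper: your energy identity for $t^2(\partial_t^2 f,\partial_t f)_{L^2}$ is exactly the paper's estimate for $\|t\partial_t f\|_{L^2}^2$, and your auxiliary bound on $\int_0^T t\|\partial_t f\|_{L^2}^2\,dt$ (substituting $\partial_t f=g-\mathcal L f$, using Proposition~\ref{prop L}, \eqref{12} and Lemma~\ref{lemma3.1}, then absorbing the $\delta\int t^2\|\partial_t f\|_A^2$ term) is precisely the paper's treatment of the term $S_1$. No substantive difference.
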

\begin{proof}
    Since the solution of Cauchy problem \eqref{1-2} belongs to $C^\infty(]0, T[; \mathcal S(\mathbb R^3))$, we have that
    \begin{equation*}
        \partial_t(t\partial_tf)+\mathcal L_1(t\partial_tf)=\partial_tf-\mathcal L_2(t\partial_tf)+t\partial_tg,
    \end{equation*}
    and for $0<t\le T$,
    \begin{equation*}
        \begin{split}
            &\frac12\|t\partial_tf\|^2_{L^2(\mathbb R^3)}+\int_0^t(\mathcal L_1(s\partial_sf), s\partial_sf)_{L^2(\mathbb R^3)}ds\\
            &=\int_0^ts\|\partial_sf\|^2_{L^2(\mathbb R^3)}ds-\int_0^t(\mathcal L_2(s\partial_sf), s\partial_sf)_{L^2(\mathbb R^3)}+\int_0^t(s\partial_sf, s\partial_sg)_{L^2(\mathbb R^3)}ds\\
            &=S_1+S_2+S_3.
        \end{split}
    \end{equation*}
    Firstly, since $\gamma<0$, by lemma \ref{lemma2.1}, for all $0<t\le T$, we can conclude
    \begin{equation*}
        \begin{split}
            &\int_0^t(\mathcal L_1(s\partial_sf), s\partial_sf)_{L^2(\mathbb R^3)}ds\\
            &\ge(1-\epsilon_1)\int_0^t\|s\partial_sf\|^2_Ads-C_{\epsilon_1}\int_0^t\|s\partial_sf\|^2_{2, \frac{\gamma}{2}}ds\\
            &\ge(1-\epsilon_1)\int_0^t\|s\partial_sf\|^2_Ads-TC_{\epsilon_1}\int_0^ts\|\partial_sf\|^2_{L^2(\mathbb R^3)}ds.
        \end{split}
    \end{equation*}
    For the term $S_1$, since $f$ is the solution of \eqref{1-2}, using Proposition \ref{prop L} and \eqref{12}, for all $0<t\le T$, we have
    \begin{equation*}
        \begin{split}
            &\int_0^ts\|\partial_sf\|^2_{L^2(\mathbb R^3)}ds\\
            &=\int_0^t(g, s\partial_sf)_{L^2(\mathbb R^3)}ds-\int_0^t(\mathcal L_1f, s\partial_sf)_{L^2(\mathbb R^3)}ds\\
            &\qquad-\int_0^t(\mathcal L_2f, s\partial_sf)_{L^2(\mathbb R^3)}ds\\
            &\le\int_0^t\|g(s)\|_{L^2(\mathbb R^3)}\|s\partial_sf\|_{L^2(\mathbb R^3)}ds+C_2\int_0^t\|f(s)\|_A\|s\partial_sf\|_Ads\\
            &\qquad+C_4\int_0^t\|f(s)\|_A\|s\partial_sf\|_Ads.
        \end{split}
    \end{equation*}
    For all $0<t\le T$, by Cauchy-Schwarz inequality, it follows that
    \begin{equation*}
        \begin{split}
            &\int_0^t\|g(s)\|_{L^2(\mathbb R^3)}\|s\partial_sf\|_{L^2(\mathbb R^3)}ds\\
            &\le T^{\frac12}\int_0^t\|g(s)\|_{L^2(\mathbb R^3)}s^{\frac12}\|\partial_sf\|_{L^2(\mathbb R^3)}ds\\
            &\le\frac12\int_0^ts\|\partial_sf\|^2_{L^2(\mathbb R^3)}ds+\frac{T}{2}\int_0^t\|g(s)\|^2_{L^2(\mathbb R^3)}ds.
        \end{split}
    \end{equation*}
    Using Cauchy-Schwarz inequality, since $\gamma<0$, for any $0<\delta<1$, we have
    \begin{equation*}
        \begin{split}
            \int_0^ts\|\partial_sf\|^2_{L^2(\mathbb R^3)}ds&\le\delta\int_0^t\|s\partial_sf\|^2_Ads+T\int_0^t\|g(s)\|^2_{L^2(\mathbb R^3)}ds\\
            &\quad+C_\delta\int_0^t\|f(s)\|^2_{A}ds,
        \end{split}
    \end{equation*}
    with $C_\delta$ depends on $C_2, C_4$. Combining \eqref{3-1} and Lemma \ref{lemma3.1}, for $0<\delta<1$,
    \begin{equation}\label{3.2}
        \begin{split}
            S_1&=\int_0^ts\|\partial_sf\|^2_{L^2(\mathbb R^3)}ds\le\delta\int_0^t\|s\partial_sf\|^2_Ads+T^2A^2+C_\delta C^2_5.
        \end{split}
    \end{equation}
    For the term $S_2$, let $f_1=s\partial_sf$ in \eqref{f1}, then for all $0<t\le T$, 
    \begin{equation*}
        \begin{split}
            |S_2|\le\epsilon_2\int_0^t\|s\partial_sf\|^2_Ads+C_{\epsilon_2}T\int_0^ts\|\partial_sf\|^2_{L^2(\mathbb R^3)}ds,
        \end{split}
    \end{equation*}
    by using \eqref{3.2} with $c_3T\delta\le\epsilon_2$,
    \begin{equation*}
        |S_2|\le2\epsilon_2\int_0^t\|s\partial_sf\|^2_Ads+\tilde C_{\epsilon_2},
    \end{equation*}
    with $\tilde C_{\epsilon_2}$ depends on $C_2, C_4, C_5, A$ and $T$.

    Finally, for the term $S_3$, by Cauchy-Schwarz inequality, it follows that
    \begin{equation*}
        \begin{split}
            |S_3|\le\int_0^t\|s\partial_sg\|^2_{L^2(\mathbb R^3)}+T\int_0^ts\|\partial_sf\|^2_{L^2(\mathbb R^3)}ds.
        \end{split}
    \end{equation*}
    Since $g$ is analytic with respect to $t$ and $v$, for all $0<t\le T$, we have
    \begin{equation*}
        \|t\partial_tg\|_{L^2(\mathbb R^3)}\le A^2,
    \end{equation*}
    applying \eqref{3.2} with $T\delta\le\epsilon_2$ to get $S_3$ can be bounded by
    \begin{equation*}
        \epsilon_2\int_{0}^t\|s\partial_sf\|^2_Ads+TA^4+T\left(T^2A^2+C_{\epsilon_2}C^2_5\right).
    \end{equation*}
    Therefore, combining the results above and using \eqref{3.2} with $TC_{\epsilon_1}\delta<\epsilon_1$, let 
    $$\epsilon_1=\epsilon_2=\frac{1}{16},\quad0<\delta\le\frac14,$$
    and taking $C_6\ge\sqrt{\tilde C_5}$, we get
    \begin{equation*}
        \begin{split}
            \|t\partial_tf\|^2_{L^\infty(]0, T]; L^2(\mathbb R^3))}+\int_0^T\|t\partial_tf\|^2_Adt\le\tilde C_5\le\left(C_6\right)^2.
        \end{split}
    \end{equation*}
    with $C_6$ depend on $C_2, C_4, C_5, A$ and $T$.
\end{proof}

\section{Analytic Smoothing Effect for Time Variable}

In this section, we will show the analytic regularity of the
time variable for $t>0$. We construct the following estimate, which implies Theorem \ref{thm} immediately .

\begin{prop}\label{prop 4.1}
    For $-3<\gamma<0$. Let $f$ be the solution of Cauchy problem \eqref{1-2}, and $f_0\in L^2(\mathbb R^3)$. Then there exists a constant $B>0$ such that for any $T>0$, $t\in]0,T]$ and $k\in\mathbb N$,
    \begin{equation}\label{4-1}
        \|t^k\partial_t^kf\|^2_{L^\infty(]0, T]; L^2(\mathbb R^3))}+\int_0^T\|t^k\partial_t^kf\|^2_Adt\le\left(B^{k+1}k!\right)^2.
    \end{equation}
\end{prop}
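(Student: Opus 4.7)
The plan is induction on $k$. The base cases $k=0$ and $k=1$ are exactly Lemma \ref{lemma3.1} and Lemma \ref{lemma3.2}, so it suffices to fix $B$ large enough that $C_5 \le B$ and $C_6 \le B^2$. For the inductive step, assume \eqref{4-1} holds for $k-1$ (with $k\ge 2$). Differentiating \eqref{1-2} in $t$ a total of $k$ times gives $\partial_t^{k+1} f + \mathcal L \partial_t^k f = \partial_t^k g$. Setting $u_k := t^k \partial_t^k f$, the product rule yields
\begin{equation*}
\partial_t u_k + \mathcal L u_k = k\, t^{k-1} \partial_t^k f + t^k \partial_t^k g, \qquad u_k(0)=0.
\end{equation*}
Pairing with $u_k$ in $L^2(\mathbb R^3)$ and integrating on $[0,t]$, Lemma \ref{lemma2.1} supplies coercivity of $(\mathcal L_1 u_k, u_k)_{L^2}$ while \eqref{f1} absorbs $(\mathcal L_2 u_k, u_k)_{L^2}$; choosing $\epsilon_1,\epsilon_2$ small yields
\begin{equation*}
\|u_k(t)\|_{L^2}^2 + \int_0^t \|u_k\|_A^2\, ds \lesssim \int_0^t \|u_k\|_{L^2}^2\, ds + R_k(t),
\end{equation*}
where $R_k(t)$ collects the pairings of $u_k$ with the forcing $k\,t^{k-1}\partial_t^k f + t^k \partial_t^k g$.

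The core of the argument is the estimation of $R_k(t)$ in terms of the inductive data. The analyticity of $g$ in $t,v$ gives $\|\partial_s^\ell g(s)\|_{L^2}\le A^{\ell+1}\ell!$, so a Cauchy--Schwarz with a weight absorbing the factor $k$ controls the $g$-parts by $(B^{k+1} k!)^2$, provided $B$ is chosen larger than $TA$. The delicate term is $k\int_0^t (s^{k-1}\partial_s^k f, u_k)_{L^2}\, ds$. Using the differentiated equation $\partial_s^k f = -\mathcal L \partial_s^{k-1} f + \partial_s^{k-1} g$, I rewrite
\begin{equation*}
s^{k-1}\partial_s^k f = -\mathcal L\bigl(s^{k-1}\partial_s^{k-1} f\bigr) + s^{k-1}\partial_s^{k-1} g,
\end{equation*}
and bound the resulting inner products by Proposition \ref{prop L} and \eqref{12}, which together give $|(\mathcal L \phi,\psi)_{L^2}|\le (C_2+C_4)\|\phi\|_A\|\psi\|_A$. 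Young's inequality with small parameter $\epsilon$ gives
\begin{equation*}
\int_0^t k\,\bigl|\bigl(\mathcal L(s^{k-1}\partial_s^{k-1} f), u_k\bigr)_{L^2}\bigr|\, ds \le \epsilon\int_0^t \|u_k\|_A^2\, ds + \frac{k^2(C_2+C_4)^2}{4\epsilon}\int_0^t \|s^{k-1}\partial_s^{k-1} f\|_A^2\, ds.
\end{equation*}
By the inductive hypothesis the last time integral is at most $(B^k (k-1)!)^2$, and the algebraic identity $k^2((k-1)!)^2=(k!)^2$ turns the whole expression into $\frac{(C_2+C_4)^2}{4\epsilon B^2}(B^{k+1} k!)^2$, a factor made strictly less than $1/8$ by choosing $B$ large.

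Absorbing the $\epsilon\int_0^t\|u_k\|_A^2\, ds$ terms into the left, and applying Gronwall's inequality on the interval $[0,T]$ to control $\int_0^t\|u_k\|_{L^2}^2\, ds$, I obtain \eqref{4-1} for the index $k$ with a constant of the form $C_\ast B^k k!$, and the claim follows after enlarging $B$ one last time to swallow $C_\ast$ and the exponential factor $e^{MT}$ from Gronwall. The principal obstacle is the uniform-in-$k$ bookkeeping of constants: $B$ must be chosen so that it simultaneously handles the base cases, makes the ratio $(C_2+C_4)^2/(4\epsilon B^2)$ small, and dominates all the $(TA)^k$ factors arising from the analyticity of $g$. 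The decisive algebraic miracle is the cancellation $k\cdot (k-1)! = k!$ between the explicit factor of $k$ produced by $\partial_t t^k$ and the factorial stored in the inductive bound, which is precisely what allows a geometric growth rate in $B$ to close the induction.
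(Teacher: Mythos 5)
Your proposal is correct and follows essentially the same route as the paper: induction on $k$, the energy identity for $t^k\partial_t^k f$ with Lemma \ref{lemma2.1} and \eqref{f1} for coercivity, and the key substitution of the equation at order $k-1$ (estimated via Proposition \ref{prop L} and \eqref{12}) to exploit $k\cdot(k-1)!=k!$ against the inductive bound. Your only deviations are cosmetic — treating $k=0,1$ as base cases and closing the lower-order $L^2$ term by Gronwall instead of the paper's second substitution — and if anything you track the explicit factor $k$ more carefully than the paper does.
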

\begin{proof}
    We prove this proposition by induction on the index $k$. For $k=1$, it is enough to take in \eqref{k=1}. Assume \eqref{4-1} holds true, for any $1\le m\le k-1$ with $k\ge2$,
    \begin{equation}\label{4-2}
        \|t^m\partial_t^mf\|^2_{L^\infty(]0, T]; L^2(\mathbb R^3))}+\int_0^T\|t^m\partial_t^mf\|^2_Adt\le\left(B^{m+1}m!\right)^2.
    \end{equation}
    We shall prove \eqref{4-1} holds true for $m=k$.

    Since $\mu$ is the function with respect to $v$, which implies
    \begin{equation*}
        t^k\partial_t^k\mathcal L_1f=\mathcal L_1(t^k\partial_t^kf), \quad t^k\partial_t^k\mathcal L_2f=\mathcal L_2(t^k\partial_t^kf).
    \end{equation*}
    Then by \eqref{1-2}, we have
    \begin{equation*}
        \begin{split}
            \partial_t(t^k\partial_t^kf)+\mathcal L_1(t^k\partial_t^kf)=kt^{k-1}\partial_t^kf-\mathcal L_2(t^k\partial_t^kf)+t^k\partial_t^kg.
        \end{split}
    \end{equation*}
    Taking the $L^2(\mathbb R^3)$ inner product of both sides with respect to $t^k\partial_t^kf$, we get
    \begin{equation*}
        \begin{split}
            &\frac12\frac{d}{dt}\|t^k\partial_t^kf\|^2_{L^2(\mathbb R^3)}+(\mathcal L_1(t^k\partial_t^kf) ,t^k\partial_t^kf)_{L^2(\mathbb R^3)}\\
            &=k(t^{k-1}\partial_t^kf, t^{k}\partial_t^kf)_{L^2(\mathbb R^3)}-(\mathcal L_2(t^k\partial_t^kf) ,t^k\partial_t^kf)_{L^2(\mathbb R^3)}\\
            &\quad+(t^{k}\partial_t^kg, t^{k}\partial_t^kf)_{L^2(\mathbb R^3)}.
        \end{split}
    \end{equation*}
    For all $0<t\le T$, integrating from 0 to $t$, since $\gamma<0$, by using lemma \ref{lemma2.1}, it follows that
    \begin{equation*}
        \begin{split}
            &\int_0^t(\mathcal L_1(s^k\partial_s^kf) ,s^k\partial_s^kf)_{L^2(\mathbb R^3)}ds\\
            &\ge(1-\epsilon_1)\int_0^t\|s^k\partial_s^kf\|^2_Ads-C_{\epsilon_1}\int_0^t\|s^k\partial_s^kf\|^2_{2, \frac{\gamma}{2}}ds\\
            &\ge(1-\epsilon_1)\int_0^t\|s^k\partial_s^kf\|^2_Ads-TC_{\epsilon_1}\int_0^ts^{2k-1}\|\partial_s^kf\|^2_{L^2(\mathbb R^3)}ds,
        \end{split}
    \end{equation*}
    and let $f_1=s^k\partial_s^kf$ in \eqref{f1} to get
    \begin{equation*}
        \begin{split}
            \int_0^t|(\mathcal L_2(s^k\partial_s^kf) ,&s^k\partial_s^kf)_{L^2(\mathbb R^3)}|ds\le\epsilon_2\int_0^t\|s^k\partial_s^kf\|^2_Ads\\
            &\quad+TC_{\epsilon_2}\int_0^ts^{2k-1}\|\partial_s^kf\|^2_{L^2(\mathbb R^3)}ds,
        \end{split}
    \end{equation*}
    then using Cauchy-Schwarz inequality to get
    \begin{equation*}
        \begin{split}
            &\int_0^t|(s^{k}\partial_s^kg, s^{k}\partial_s^kf)_{L^2(\mathbb R^3)}|ds\\
            &\le\frac12\int_0^t\|s^{k}\partial_s^kg\|^2_{L^2(\mathbb R^3)}ds+\frac12\int_0^t\|s^{k}\partial_s^kf\|^2_{L^2(\mathbb R^3)}ds\\
            &\le\frac12\int_0^t\|s^{k}\partial_s^kg\|^2_{L^2(\mathbb R^3)}ds+\frac{T}{2}\int_0^ts^{2k-1}\|\partial_s^kf\|^2_{L^2(\mathbb R^3)}ds.
        \end{split}
    \end{equation*}
    Combining the results above, and taking
    $$\epsilon_1=\epsilon_2=\frac18,$$
    we have for all $0<t\le T$,
    \begin{equation}\label{4-3}
        \begin{split}
            &\frac{d}{dt}\|t^k\partial_t^kf\|^2_{L^2(\mathbb R^3)}+\frac32\int_0^t\|s^k\partial_s^kf\|^2_Ads\\
            &\le\int_0^t\|s^{k}\partial_s^kg\|^2_{L^2(\mathbb R^3)}ds+C_7\int_0^ts^{2k-1}\|\partial_s^kf\|^2_{L^2(\mathbb R^3)}ds,
        \end{split}
    \end{equation}
    with $C_7$ depends on $T$.

    Since $f$ is the solution of \eqref{1-2} and $k\ge2$, we have
    $$
    \partial_t^kf=\partial_t^{k-1}g-\mathcal L_1(\partial_t^{k-1}f)-\mathcal L_2(\partial_t^{k-1}f),$$
    which implies
    \begin{equation*}
        \begin{split}
            \int_0^t&s^{2k-1}\|\partial_s^kf\|^2_{L^2(\mathbb R^3)}ds=\int_0^t(s^{k-1}\partial_s^{k-1}g, s^k\partial_s^kf)_{L^2(\mathbb R^3)}ds\\
            &-\int_0^t(\mathcal L_1(s^{k-1}\partial_s^{k-1}f), s^k\partial_s^kf)_{L^2(\mathbb R^3)}ds-\int_0^t(\mathcal L_2(s^{k-1}\partial_s^{k-1}f), s^k\partial_s^kf)_{L^2(\mathbb R^3)}ds.
        \end{split}
    \end{equation*}
    For all $0<t\le T$, using Cauchy-Schwarz inequality, we have
    \begin{equation*}
        \begin{split}
            &\int_0^t(s^{k-1}\partial_s^{k-1}g, s^k\partial_s^kf)_{L^2(\mathbb R^3)}ds\\
            &\le T^{\frac12}\int_0^t\|s^{k-1}\partial_s^{k-1}g\|_{L^2(\mathbb R^3)}s^{\frac{2k-1}{2}}\|\partial_s^kf\|_{L^2(\mathbb R^3)}\\
            &\le\frac{T}{2}\int_0^t\|s^{k-1}\partial_s^{k-1}g\|^2_{L^2(\mathbb R^3)}+\frac12\int_0^ts^{2k-1}\|\partial_s^kf\|^2_{L^2(\mathbb R^3)}ds.
        \end{split}
    \end{equation*}
    By using Proposition \ref{prop L}, \eqref{12} and Cauchy-Schwarz inequality, for any $0<\delta<1$, there exists a constant $C_\delta>0$ such that for all $0<t\le T$,
    \begin{equation}\label{4-4}
        \begin{split}
            \int_0^ts^{2k-1}\|\partial_s^kf\|^2_{L^2(\mathbb R^3)}ds&\le\delta\int_0^t\|s^{k}\partial_s^{k}f\|^2_{A}ds +T\int_0^t\|s^{k-1}\partial_s^{k-1}g\|^2_{L^2(\mathbb R^3)}\\
            &\quad+C_\delta\int_0^t\|s^{k-1}\partial_s^{k-1}f\|^2_Ads,
        \end{split}
    \end{equation}
    with $C_\delta$ depends on $C_2, C_4$. Let $C_7\delta\le\frac12$, substituting \eqref{4-4} into \eqref{4-3}, we get
    \begin{equation*}
        \begin{split}
            &\frac{d}{dt}\|t^k\partial_t^kf\|^2_{L^2(\mathbb R^3)}+\int_0^t\|s^k\partial_s^kf\|^2_Ads\\
            &\le\tilde C_7\left(\int_0^t\|s^{k-1}\partial_s^{k-1}g\|^2_{L^2(\mathbb R^3)}ds+\int_0^t\|s^{k-1}\partial_s^{k-1}f\|^2_Ads\right)\\
            &\qquad+\int_0^t\|s^k\partial_s^kg\|^2_{L^2(\mathbb R^3)}ds,
        \end{split}
    \end{equation*}
    with $\tilde C_7$ depends on $C_2, C_4, C_7$ and $T$.

    Finally, since $g$ is analytic with respect to $t$ and $v$, for any $k\in\mathbb N$, there exists a constant $A>0$ such that for any $0<t\le T$,
    \begin{equation*}
        \|t^k\partial_t^kg\|_{L^2(\mathbb R^3)}\le A^{k+1}k!,
    \end{equation*}
    taking $B\ge\max\{A,\sqrt{2\tilde C_7}\}$, using the induction hypothesis \eqref{4-2}, we obtain
    \begin{equation*}
        \begin{split}
            &\frac{d}{dt}\|t^k\partial_t^kf\|^2_{L^2(\mathbb R^3)}+\int_0^t\|s^k\partial_s^kf\|^2_Ads\\
            &\le\tilde C_7\left((A^{k}(k-1)!)^2+(B^{k}(k-1)!)^2\right)+(A^{k+1}k!)^2\\
            &\le(B^{k+1}k!)^2,
        \end{split}
    \end{equation*}
    with $B$ depends on $C_1, C_2, C_4, A$ and $T$. We finish the proof of proposition \ref{prop 4.1}.
\end{proof}

\bigskip
\noindent {\bf Acknowledgements.}
This work was supported by the NSFC (No.12031006) and the Fundamental
Research Funds for the Central Universities of China.

\end{document}